\theoremstyle{plain}
\newtheorem{theorem}{Theorem}[section]
\newtheorem{lemma}[theorem]{Lemma}
\newtheorem{claim}[theorem]{Claim}
\newtheorem{proposition}[theorem]{Proposition}
\newtheorem{conjecture}[theorem]{Conjecture}
\newcommand{\NN}{\mathbb{N}}
\newcommand{\pr}{\mathbb{P}}
\newcommand{\E}[0]{\mathbb{E}}
\newcommand{\sm}[0]{\setminus}
\renewcommand{\dots}[0]{,\ldots,}
\newcommand{\beq}[1]{\begin{equation}\label{#1}}
\newcommand{\enq}[0]{\end{equation}}
\newcommand{\f}[0]{{\mathcal F}}
\newcommand{\g}[0]{{\mathcal G}}
\newcommand{\M}[0]{{\mathscr M}}
\newcommand{\R}[0]{{\mathcal R}}
\newcommand{\Ra}[0]{\Rightarrow}
\newcommand{\0}[0]{\emptyset}
\newcommand{\C}[2]{{\binom{#1}{#2}}}
\newcommand{\eps}[0]{\varepsilon }
\newcommand{\rred}[1]{\textcolor{red}{#1}}
\newcommand{\mn}[0]{\medskip\noindent}
\newcommand{\nin}[0]{\noindent}
\newcommand{\sH}[0]{{\mathscr H}}
\newcommand{\sub}[0]{\subseteq}
\title{The threshold for the square of a Hamilton cycle}
\author{Jeff Kahn}
\address{Department of Mathematics, Rutgers University, Piscataway, NJ 08854, USA}
\email{jkahn@math.rutgers.edu}
\author{Bhargav Narayanan}
\address{Department of Mathematics, Rutgers University, Piscataway, NJ 08854, USA}
\email{narayanan@math.rutgers.edu}
\author{Jinyoung Park}
\address{ School of Mathematics, Institute for Advanced Study, Princeton, NJ 08540, USA}
\email{jpark@math.ias.edu}
\date{14 September, 2020}
\subjclass[2010]{Primary 05C80; Secondary 05C38}
\begin{document}

\maketitle
\begin{abstract}
Resolving a conjecture of K\"uhn and Osthus from 2012, we show that $p= 1/\sqrt{n}$ is the threshold for the random graph $G_{n,p}$ to contain the square of a Hamilton cycle.   
\end{abstract}
\maketitle

\section{Introduction}
Understanding thresholds for various properties of interest has been central to the study of random graphs since its initiation by Erd\H{o}s and R\'enyi~\citep{ER}, and thresholds for containment of (copies of) specific graphs in the random graphs $G_{n,p}$ and $G_{n,m}$
have been the subject of some of the most powerful work in the area. 
(See e.g.~\citep{BBbook, JLR, FrKa}, to which we also refer for threshold basics.)

Hamilton cycles in random graphs in particular are the subject of an extensive
literature, with, to begin, the question of when they appear posed in~\citep{ER}
and answered in~\citep{ham1, ham2, ham3, ham4}; see \citep{FriezeBib} for a thorough account.
Here we consider a related question first raised by K\"uhn and Osthus~\citep{KO}: 
\emph{when does the square of a Hamilton cycle appear in the random graph?}
(The \emph{k}th \emph{power} of a graph $G$ 
is the graph on $V(G)$ with two vertices joined iff their distance in $G$ is at most $k$.)

For this discussion we write $\sH_n^k$ for the $k$th power of an $n$-vertex cycle
(so a Hamilton cycle of $K_n$).
The expected number of copies of $\sH_n^k$ in $G_{n,p}$ is $(n-1)!p^{kn} / 2$, 
implying that the threshold for appearance of 
$\sH_n^k$ in $G_{n,p}$ (henceforth simply ``threshold for $\sH_n^k$'')
is at least $n^{-1/k}$. 
(We follow a standard abuse in using \emph{``the'' threshold} for 
an order of magnitude rather than a specific value.)
For $k=1$, it was famously shown by P\'osa~\citep{ham1} that the threshold for 
a Hamilton cycle is $\log n / n$---this is driven not by expectation considerations, but by 
the need to avoid isolated vertices---while 
for $k \ge 3$, it follows from a general result of Riordan~\citep{oliver},
based on the second-moment method, that the threshold for $\sH_n^k$ is $n^{-1/k}$. 

The case $k=2$ has proved more stubborn: here there is no obvious analogue of isolated vertices 
pushing the threshold above $n^{-1/2}$, 
but, unlike for larger $k$, 
the second-moment method yields only weak upper bounds.
K\"uhn and Osthus~\citep{KO} conjectured that $n^{-1/2}$ is correct 
and showed that the threshold is $n^{-1/2 + o(1)}$, a bound subsequently 
improved to $(\log n)^4 n^{-1/2}$ by Nenadov and {\v S}kori\'c~\citep{NS}; 
to $(\log n)^3 n^{-1/2}$ by Fischer, {\v S}kori\'c, Steger and Truji\'c~\citep{FSST}; 
and to $(\log n)^2 n^{-1/2}$ in unpublished work of Montgomery (see~\citep{FriezeBib}).
Here we resolve the question, proving the conjecture of~\citep{KO}:

\begin{theorem}\label{mt1}
There is a universal $K$ such that for $p \ge K / \sqrt{n}$,
\[ \pr (G_{n,p} \text{ contains the square of a Hamilton cycle}) \to 1
\,\,\,\,\mbox{as $n \to \infty$}.\]
\end{theorem}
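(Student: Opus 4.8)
The plan is to work within the ``spread'' (fractional-expectation-threshold) framework and then do the extra work needed to remove the spurious logarithmic factor it leaves behind. To find a copy of $\sH_n^2$ in $G_{n,p}$ it would suffice, by the fractional Kahn--Kalai theorem of Frankston--Kahn--Narayanan--Park, to exhibit a $q$-spread probability measure $\mu$ on the family $\mathcal{H}$ of (edge sets of) copies of $\sH_n^2$ in $K_n$ with $q=O(n^{-1/2})$ --- that is, with $\mu(\{H:S\subseteq H\})\le q^{|S|}$ for every edge set $S$. Such a $\mu$ is not hard to produce (sample a uniformly random cyclic order of $[n]$, perhaps with some extra local re-randomization), the spread estimate reducing to the routine fact that at most an $O(n^{-(s-1)})$ fraction of copies extend a given ``square-of-path'' configuration on $s$ prescribed vertices in a prescribed order --- so the extremal case is a long contiguous segment, giving $q\asymp n^{-1/2}$. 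The catch is that this route loses a factor $\log\ell\asymp\log n$, where $\ell=2n$ is the number of edges of $\sH_n^2$, and it \emph{must}: taking $S$ to be all $2n$ edges of a single copy shows any spread measure has $q\ge|\mathcal{H}|^{-1/2n}=((n-1)!/2)^{-1/2n}\sim\sqrt{e/n}$, so $q\log\ell\gtrsim n^{-1/2}\log n$ no matter what $\mu$ is. Removing this spurious logarithm --- which is exactly what the previous polylogarithmic bounds of K\"uhn--Osthus, Nenadov--{\v S}kori\'c, Fischer--{\v S}kori\'c--Steger--Truji\'c and Montgomery fell short of --- is the whole problem.

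The key point is that $\sH_n^2$, like $\sH_n^k$ for $k\ge 3$, sits comfortably \emph{above} the coupon-collector regime, so one should not try to build a single fixed copy in one shot. I would split $p=p_0+p_1+\cdots+p_t$ into a \emph{bounded} number $t=O(1)$ of rounds, each of density $\Theta(n^{-1/2})$ --- keeping the number of rounds bounded is what stops the logarithm from creeping back in --- and proceed as follows. In round $0$, use a Johansson--Kahn--Vu-type (second-moment) argument to build an almost-spanning ``skeleton'': a near-perfect packing of $[n]$ by vertex-disjoint copies of the gadget $Q=$ the square of a path on $L=C\log n$ vertices. The crucial observation is that for this choice of $L$ the packing threshold is $\Theta\bigl(n^{-1/2}(\log n)^{1/(2L-3)}\bigr)=\Theta(n^{-1/2})$, so this step loses nothing; simultaneously set aside a linear-sized reserve $R$, packed instead by ``flexible'' absorber gadgets. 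In the remaining $O(1)$ rounds, sprinkle fresh edges to (a) absorb the $o(n)$ vertices missed by the $Q$-gadgets, and (b) splice the $\Theta(n/\log n)$ gadgets together with the absorbers into one spanning cyclic square-of-path, i.e.\ a copy of $\sH_n^2$.

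The heart of the matter --- and the step I expect to be the main obstacle --- is the log-free \emph{assembly} in (b). Joining the end of one $Q$-gadget to the start of another forces a specific triple of new edges, present only with probability $\Theta(n^{-3/2})$, so the ``connectability graph'' of a naive fixed packing is subcritical and the splices cannot simply be read off the sprinkled edges. Overcoming this means engineering the gadgets and the absorbing reserve $R$ so that each admits \emph{many} admissible exit/entry configurations, and then carrying out all the splices essentially in parallel --- via a matching- or Hamilton-cycle-type argument at the level of gadgets, fed by one further $\Theta(n^{-1/2})$-density round, with $R$ absorbing the routing conflicts as well as the leftover vertices from (a). Getting this to work with only $\Theta(n^{-1/2})$ of extra density and a bounded number of rounds is where essentially all the difficulty lies; by contrast, the spread estimate of the first paragraph and the packing input of round $0$ are comparatively standard (one wrinkle being that $Q$ grows with $n$, so round $0$ needs a packing result valid for slowly growing gadgets, or a direct argument --- available since $p=\Theta(n^{-1/2})$ is well above threshold once $C$ is a large constant).
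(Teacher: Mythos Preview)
Your proposal has a genuine gap, and you yourself locate it precisely: the assembly step (b). You note that joining two $Q$-gadgets forces a fixed triple of edges, so the naive connectability graph is subcritical, and then say this is to be overcome by ``engineering the gadgets and the absorbing reserve $R$ so that each admits \emph{many} admissible exit/entry configurations'' and running ``a matching- or Hamilton-cycle-type argument at the level of gadgets.'' But that sentence is the entire problem, not a solution to it. Every prior attempt you cite (K\"uhn--Osthus, Nenadov--\v{S}kori\'c, Fischer--\v{S}kori\'c--Steger--Truji\'c, Montgomery) used exactly this absorption/assembly paradigm, and each one was stuck at a polylogarithmic loss because the number of rounds (or the density per round) needed for the splicing/absorbing phase could not be brought down to $O(1)$ at density $\Theta(n^{-1/2})$. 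Nothing in your outline explains what new combinatorial idea would make the flexible-gadget matching succeed where those did not; ``many admissible exit/entry configurations'' is a desideratum, not a mechanism. As written, this is a description of the obstacle rather than a way around it.

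The paper does something quite different, and in particular does \emph{not} use absorption at all. You correctly observe that the spread $q\sim\sqrt{e/n}$ is tight and conclude that the FKNP route is a dead end; the paper instead exploits the fact that this tightness is extremely atypical. Concretely, for a uniformly random copy $J$ of $\sH_n^2$ and a fixed copy $S$, the fraction $f_i$ of $J$'s with $|J\cap S|=i$ satisfies $f_i\le e^{O(i)}n^{-i/2}$ (proved by elementary counting of subgraphs of $S$ with $\ell$ edges and $c$ components), so the ``local spread'' $(|\g\cap\langle I\rangle|/|\g|)^{1/|I|}$ is usually more like $1/n$ than $1/\sqrt n$. This refinement is fed into a two-round exposure: a first $\Theta(n^{-1/2})$ round $W_0$ such that for at least half the copies $S$, some $J\in\g$ has $|J\setminus W_0|\le 4\sqrt n$ (this is the main lemma, proved by a pathological/nonpathological split reminiscent of the FKNP proof itself); then a second $\Theta(n^{-1/2})$ round, analyzed by a straight second-moment computation on the resulting $4\sqrt n$-uniform ``fragment'' hypergraph. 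No gadgets, no absorption, two rounds total.
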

While the aforementioned attempts are all rooted in the notion of `absorption'
introduced in~\citep{RRSz},
the proof of Theorem~\ref{mt1} takes a different approach, based on
the recent resolution, by Frankston and the present authors~\citep{FKNP}, 
of Talagrand's relaxation~\citep{Talagrand} of the `expectation threshold' conjecture of~\citep{KK}. 
We will say (not quite following~\citep{FKNP})
that a 
hypergraph $\g$ on a finite vertex set $V$ is $q$-\emph{spread} if
\beq{spread}
|\g \cap \langle I \rangle| \le q^{|I|}|\g|
\enq
for each $I \subseteq V$, where $\langle I \rangle$ is the increasing family generated by $I$; in this language, the main result of~\citep{FKNP} says that there is a fixed $C$
such that if a hypergraph $\g$ with edges of size at most $\ell$ is $q$-spread, 
then a $(C q \log \ell )$-random subset of $V$ 
is likely to contain some edge of $\g$. 

Applied to the hypergraph 
$\g$ consisting 
of all copies of $\sH_n^2$ (which is $q$-spread with $q\sim\sqrt{e/n}$; see below), 
the result of~\citep{FKNP} says that the threshold for $\sH_n^2$ is at most $\log n / \sqrt{n}$. 
A key point in our proof of Theorem~\ref{mt1}, which eliminates the offending $\log n$, is
the observation that large ``local spreads''
$(|\g\cap \langle I\rangle|/|\g|)^{1/|I|}$ are relatively rare,
a typical value being more like $1/n$ than $1/\sqrt{n}$.

Formally, we prove the following slight weakening of Theorem~\ref{mt1}.
\begin{theorem}\label{mt2}
For each $\eps > 0$ there is a $K $ such that for $p \ge K/\sqrt{n}$,
\[ \pr (G_{n,p} \text{ contains the square of a Hamilton cycle}) \ge 1 - \eps \]
for sufficiently large $n$.
\end{theorem}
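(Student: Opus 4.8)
The plan is to derive Theorem~\ref{mt2} by re-running the fragmentation argument of~\citep{FKNP} in a form that exploits the rigid structure of squares of Hamilton cycles and thereby avoids the logarithmic loss that the general statement of~\citep{FKNP} incurs. Throughout, set $V = E(K_n)$ and let $\g$ be the hypergraph whose edges are the edge sets of the copies of $\sH_n^2$ in $K_n$. Since each such copy has exactly $2n$ edges and the number of copies is $(1+o(1))(n-1)!/2$, a direct count --- fixing where a prescribed set $I$ of edges must sit inside the cyclic order and then completing --- shows that $\g$ is $q$-spread with $q = (1+o(1))\sqrt{e/n}$, the worst case in this bound being attained when $I$ is (essentially) a spanning square Hamilton path; feeding this into~\citep{FKNP} with $\ell = 2n$ gives only the threshold bound $O(\log n/\sqrt n)$, and the entire problem is to delete this $\log n$.

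The first step is to pin down \emph{where} the local spreads $(|\g\cap\langle I\rangle|/|\g|)^{1/|I|}$ can be large. Here one proves a structural dichotomy: the projection of $I$ to the vertex set is a subgraph of $\sH_n^2$ for any relevant ordering, and if that projection is, say, a disjoint union of square-paths spanning $f$ vertices with $P$ components, then $|I| \approx 2f - 3P$ while $|\g\cap\langle I\rangle|/|\g| \approx n^{-(f-P)}$, so the local spread is $\approx n^{-(f-P)/(2f-3P)}$; this quantity always lies below $n^{-1/2}$ and approaches $n^{-1/2}$ only as $P/f \to 0$, that is, only when $I$ contains \emph{long} square-paths. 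Configurations that are not consolidated in this way --- bare paths, stars, short pieces, anything ``flexible'' --- have local spread $n^{-1+o(1)}$, so, in the weighted sense relevant to the iteration, a typical local spread really is $\approx 1/n$ rather than $\approx 1/\sqrt n$, and one must make this quantitative enough to control how often the consolidated regime is encountered.

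The second step is to push this through an FKNP-style iteration. In~\citep{FKNP} one writes the $p$-random set as a union of $\Theta(\log\ell)$ independent $\Theta(q)$-random sets and, over $\Theta(\log\ell)=\Theta(\log n)$ rounds, maintains a fragment $F \sub S \cap W$ of some $S\in\g$ whose complement $S\sm F$ halves each round; the $\Theta(\log n)$ rounds, each costing $\Theta(q)=\Theta(n^{-1/2})$ in density, are precisely the $\log n$. The point of Step~1 is that outside the consolidated regime the relevant link behaves as though it were $n^{-1}$-spread rather than $n^{-1/2}$-spread, so a \emph{single} round at density $\Theta(n^{-1/2})$ shrinks $S\sm F$ by a factor $n^{\Theta(1)}$ (not merely by $2$); hence $O(1)$ such rounds should suffice to drive $|S\sm F|$ from $2n$ down to $o(1)$, with total density $\Theta(n^{-1/2})$. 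The consolidated fragments that sabotage this --- those already containing long square-paths of $S$ --- are rare enough, by the quantitative form of Step~1, to be absorbed into the failure probability by a crude union bound, and the small residual, together with the $O(1)$ edges needed to close a square Hamilton path into a cycle, is finished off directly. Assembling these estimates would give, for $p \ge K/\sqrt n$ with $K=K(\eps)$ large enough, that $G_{n,p}$ fails to contain a copy of $\sH_n^2$ with probability below $\eps$ once $n$ is large, which is Theorem~\ref{mt2}.

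The main obstacle is the tension between the two regimes: the fragment $F$ necessarily consolidates --- at the end $F=S$ is a full square Hamilton cycle, maximally consolidated, sitting exactly at the $n^{-1/2}$ extreme --- so one cannot simply run every round at the favourable rate. Making the scheme legitimate requires controlling \emph{when} consolidation sets in: either arranging the fragments so that consolidation is postponed until $|S\sm F|$ is already small enough for the residual to be dispatched by a direct union bound over $\g\cap\langle F\rangle$, or reorganising the construction so that the hard part is built in a dispersed way (many vertex-disjoint short square-paths in parallel) and only afterwards linked up, the linking hypergraph having bounded edge size so that its own $\log\ell$ is $O(1)$. Establishing an honest, log-free sharpening of the FKNP dispersal lemma under the hypothesis ``$q$-spread with large local spreads rare'', with an explicit constant, is the technical core; by comparison the spread computations and the passage between square Hamilton paths and cycles are routine.
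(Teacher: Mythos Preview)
Your Step~1 is on target and is exactly the paper's key observation: the ``local spread'' $(|\g\cap\langle I\rangle|/|\g|)^{1/|I|}$ is close to $n^{-1/2}$ only when $I$ is highly consolidated, and is more like $n^{-1}$ otherwise. The paper makes this precise via Propositions~\ref{easy} and~\ref{easy2}, which together give the bound $f_i \le e^{O(i)}n^{-i/2}$ on the fraction of $J\in\g$ with $|J\cap S|=i$ for a fixed $S$; this is the quantitative form of your dichotomy.

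Where your plan departs from the paper---and where it has a real gap---is Step~2. You propose an $O(1)$-round FKNP iteration, each round shrinking $|S\sm F|$ by a factor $n^{\Theta(1)}$, and then you yourself flag the obstruction: consolidation is unavoidable as $F\to S$, so the favourable rate cannot persist, and you do not say how to control when it fails. That is not a detail; it is the whole difficulty, and neither of your suggested fixes (postpone consolidation, or build in parallel and link up) is carried out.

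The paper sidesteps this entirely by \emph{not iterating}. It runs a single FKNP-type round at density $\Theta(n^{-1/2})$ and stops once fragments reach size $k=4\sqrt n$ (Lemma~\ref{MP}): the $f_i$ bound above is exactly what is needed to show that, for most $(S,W)$, some $J\in\g$ with $J\sub S\cup W$ has $|J\sm W|\le k$. It then finishes not with a union bound or further rounds but with a second-moment argument: collect one $k$-element fragment $\chi(S,W_0)$ for each good $S$, and show by Chebyshev that a fresh $\Theta(n^{-1/2})$-random set $W_1$ contains one of them. The variance calculation works for the \emph{same} structural reason---overlaps of size $i$ between two fragments contribute $e^{O(i)}n^{-i/2}$ rather than $q^i$---so the improved spread is used twice, once to get small fragments in one shot and once to control correlations. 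This two-phase ``one round plus second moment'' architecture is what you are missing; your iterated scheme, as written, does not close.
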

\nin
Getting Theorem~\ref{mt1} from this just requires applying the
machinery of Friedgut~\citep{friedgut1, friedgut2} to say 
that the property of containing $\sH_n^2$ has a sharp threshold.
We omit this by now routine step (and the relevant definitions), 
and refer the reader to (e.g.)~\citep{schacht} for a similar argument.

Though there seems little hope of proving such a statement along the present lines, 
it is natural to guess that the above expectation considerations drive the
threshold more precisely, namely:
\begin{conjecture}
For fixed $\eps > 0 $ and $p > (1+ \eps)\sqrt{e  / n}$, 
\[
\pr (G_{n,p} \text{ contains the square of a Hamilton cycle}) \to 1 \,\,\,\,\mbox{as $n \to \infty$}.
\]
\end{conjecture}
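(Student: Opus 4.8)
The plan is to apply the main theorem of~\citep{FKNP} (as restated in the excerpt) to a carefully chosen family $\g$ of copies of $\sH_n^2$, but to squeeze out the extra $\log n$ factor by exploiting the observation flagged in the introduction: the ``local spreads'' $(|\g \cap \langle I\rangle| / |\g|)^{1/|I|}$ are typically of order $1/n$ rather than $1/\sqrt n$, and configurations $I$ forcing a large local spread are rare. Concretely, one first needs the baseline spread estimate: the hypergraph of all $\sH_n^2$-copies on $[n]$ is $q$-spread with $q \sim \sqrt{e/n}$. This is a direct counting exercise --- given that $I$ (a set of $|I|$ candidate edges) is contained in a copy of $\sH_n^2$, one counts the number of ways to extend $I$ to a full square of a Hamilton cycle, comparing against the total count $(n-1)!\,p^{2n}$-type expression; the worst case $I$ is essentially a union of ``square paths'', and the bound $q^{|I|}$ comes out with $q = (1+o(1))\sqrt{e/n}$.

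Next, rather than feeding the raw $q$-spread bound into~\citep{FKNP} (which would cost a $\log \ell$ factor with $\ell \sim n$, hence a $\log n$), I would set up a \emph{fragmentation/iteration} scheme: build the square of a Hamilton cycle in $O(1)$ or $O(\log\log n)$ rounds, where in each round one uses a fresh $p/\text{(number of rounds)}$-random slice of $G_{n,p}$ to absorb/extend partial square-path structures, and crucially re-invoke the spreadness locally on the residual problem. The point of the ``local spread is usually small'' observation is that after conditioning on a typical outcome of the earlier rounds, the relevant residual hypergraph is $q'$-spread with $q'$ much smaller --- close to $1/n$ --- so that even the $\log$ loss, now a $\log$ of a smaller object or absorbed into the round count, no longer pushes $p$ above $K/\sqrt n$. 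One would need to make precise what the ``pieces'' are (short square-paths of length polylog $n$, say, with prescribed endpoints), verify each piece-hypergraph has the improved spread after a union bound over bad $I$'s, and check that the pieces can be stitched into a genuine Hamilton square --- this stitching is a deterministic combinatorial step, choosing a cyclic order of the pieces and connecting consecutive endpoints, and should be arranged so the connector edges are also supplied by a reserved random slice.

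The main obstacle, I expect, is precisely the quantitative control of the local spreads: showing that the set of $I$ with $(|\g\cap\langle I\rangle|/|\g|)^{1/|I|}$ anomalously large (say $\ge n^{-1/2+\delta}$) is small enough that a union bound over them, against the probability that a $K/\sqrt n$-random set contains such an $I$, still goes to $0$. This requires a fairly delicate classification of the ``bad'' configurations $I$ --- geometrically these are the $I$'s whose edges cluster into a few dense square-path segments, so that few extensions remain --- together with sharp enumeration of square-path-like structures. Secondary difficulties are bookkeeping the independence between rounds (so that conditioning on earlier rounds genuinely leaves an unexposed random slice) and ensuring the connector step never fails; both are routine in spirit but require care to keep all the failure probabilities summing to less than $\eps$. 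Everything else --- the base spread computation, the invocation of~\citep{FKNP}, the passage from Theorem~\ref{mt2} to Theorem~\ref{mt1} via Friedgut --- I regard as standard.
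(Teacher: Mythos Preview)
The statement you are asked to prove is a \emph{conjecture}; the paper does not prove it and in fact says explicitly that ``there seems little hope of proving such a statement along the present lines.'' So there is no ``paper's own proof'' to compare against.

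More to the point, your proposal does not address the conjecture either. Everything you outline --- the spread estimate $q\sim\sqrt{e/n}$, the fragmentation/iteration to squeeze out the $\log n$, the control of anomalous local spreads --- is aimed at showing the threshold is $K/\sqrt{n}$ for \emph{some} constant $K$, i.e.\ at Theorem~\ref{mt2}. Nowhere in your plan is there a mechanism for driving $K$ down to $(1+\eps)\sqrt{e}$. The spread machinery of~\citep{FKNP} and the refinements you propose all lose constants at every step (the $C$ in the spread-to-threshold conversion, the round count, the union bounds over bad $I$'s, the second-moment or connector losses), and none of them is designed to be tight; getting the sharp constant would require a genuinely different idea, most likely a direct first/second-moment calculation or some structural argument that the expectation threshold is the actual threshold here. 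Your proposal, even if every step went through, would reprove Theorem~\ref{mt2} (possibly by a somewhat different route than the paper's good/bad dichotomy plus second moment) but would leave the conjecture untouched.
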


The proof of Theorem~\ref{mt2} is given in Section~\ref{s:proof}, with some basic calculations supporting
the argument provided in
Section~\ref{s:pre}.

\section{Preliminaries}\label{s:pre}
We will use $\M$ for $E(K_n)$ and from now on write $\sH$ for $\sH_n^2$.
As above, $\g$ is the $(2n)$-uniform hypergraph on vertex set $\M$ consisting of all copies of 
$\sH$ in $K_n$. Thus $|\g| = (n-1)! / 2$, and it is not hard to see 
that $\g$ is $q$-spread with $q = \left[2/(n-1)!\right]^{1/(2n)} \sim \sqrt{e/n}$, meaning (recall)
\beq{spread}
|\g \cap \langle I \rangle|\le q^{|I|}|\g| ~~~\forall  I\sub \M.
\enq

The next two statements implement
the basic idea mentioned above, that large values of $|\g\cap \langle I\rangle|/|\g|$ are rare.

\begin{proposition}\label{easy}
For an $I \subseteq \M$ with $\ell \le n/3$ edges and $c$ components, 
\[ 
|\g \cap \langle I \rangle| \le (16)^\ell \left(n-\left\lceil {\frac{\ell+c}{2}} \right\rceil{-1}\right)!
\]
\end{proposition}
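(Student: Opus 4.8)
The plan is to pass from copies of $\sH$ to cyclic sequences. Write $H(\sigma)$ for the square of the Hamilton cycle visiting the vertices in the cyclic order $\sigma$; every member of $\g\cap\langle I\rangle$ equals $H(\sigma)$ for some cyclic sequence $\sigma$ of $[n]$, so, recalling that there are $(m-1)!$ cyclic sequences on an $m$-set,
\[
|\g\cap\langle I\rangle|\ \le\ N(I):=\#\{\sigma:H(\sigma)\supseteq I\}.
\]
I will bound $N(I)$ by compressing each relevant $\sigma$: delete a set $D\subseteq V(I)$ of roughly half the vertices of $V(I)$, and show that $\sigma$ is recoverable from the shortened sequence $\sigma\sm D$ together with only $4^{|D|}$ worth of extra bookkeeping. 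This gives $N(I)\le 4^{|D|}(n-|D|-1)!$, and the two facts $|D|\le\ell$ and $|D|\ge\lceil(\ell+c)/2\rceil$ then close the argument.

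For the compression, let $I_1,\ldots,I_c$ be the components of $I$, with $t_j:=|V(I_j)|$ and $\ell_j$ edges; root a spanning tree of each $I_j$ at its least vertex $r_j$ and set $D:=\bigcup_j(V(I_j)\sm\{r_j\})$, so $|D|=\sum_j(t_j-1)=t-c$ with $t:=|V(I)|$. Order $D$ as $w_1,\ldots,w_{t-c}$ by listing the components in turn and, within each, taking the non-root vertices in nondecreasing spanning-tree depth, so the tree-parent $p(w_i)$ of each $w_i$ precedes $w_i$ in this list (or equals one of the $r_j$). Given $\sigma$ with $H(\sigma)\supseteq I$, reinsert $w_1,\ldots,w_{t-c}$ into $\sigma\sm D$ one at a time: having produced $\tau_{i-1}=\sigma\sm\{w_i,\ldots,w_{t-c}\}$, observe that $p(w_i)$ lies in $\tau_{i-1}$, and that since $\{w_i,p(w_i)\}\in I\subseteq H(\sigma)$ and passing to a subsequence can only shrink cyclic distances, the location of $w_i$ inside $\tau_i=\sigma\sm\{w_{i+1},\ldots,w_{t-c}\}$ is within cyclic distance $2$ of $p(w_i)$ --- hence it is one of at most four ``slots'' of $\tau_{i-1}$, immediately beside $p(w_i)$ or exactly one vertex away from it. Recording, for each $i$, which slot is used gives a map $\sigma\mapsto(\sigma\sm D,\mathrm{aux}(\sigma))$ into $\{\text{cyclic sequences on }[n]\sm D\}\times\{1,2,3,4\}^{t-c}$; it is injective, since from the image one recovers $\sigma$ by replaying the reinsertions. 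Thus $N(I)\le 4^{t-c}(n-(t-c)-1)!$.

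It remains to control $|D|=t-c$. Each $I_j$ is connected, so $t_j-1\le\ell_j$, giving $t-c\le\ell$ and hence $4^{t-c}\le 16^\ell$. For the other direction I may assume $\g\cap\langle I\rangle\neq\emptyset$ (otherwise the claim is trivial), so $I\subseteq H(\sigma)$ for some $\sigma$. In $\sigma$, the $t_j\le\ell_j+1\le n/3+1$ positions occupied by $V(I_j)$ leave more than $n/2$ positions uncovered, so (an independent set of the $n$-cycle having size at most $n/2$) some two consecutive positions are uncovered; restricting to the complementary arc of $n-2$ positions --- inside which two positions lie at cyclic distance $\le 2$ iff they lie at distance $\le 2$ along the arc --- and listing $V(I_j)$ in arc order exhibits $I_j$ as a subgraph of the square of a $t_j$-vertex path, so $\ell_j\le 2t_j-3$. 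Summing, $2(t-c)=\sum_j2(t_j-1)\ge\sum_j(\ell_j+1)=\ell+c$, whence $t-c\ge\lceil(\ell+c)/2\rceil$ and $(n-(t-c)-1)!\le(n-\lceil(\ell+c)/2\rceil-1)!$; together with the previous paragraph this gives the proposition. (For $n$ below an absolute constant the inequality is immediate, as then $(n-1)!/(n-\lceil(\ell+c)/2\rceil-1)!\le(n-1)^\ell\le 16^\ell$ and $|\g\cap\langle I\rangle|\le(n-1)!$.)

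The genuinely delicate step is the reinsertion bound in the second paragraph: one must keep track of the intermediate sequences $\tau_i$, which differ from $\sigma$, and the reason exactly four slots suffice is precisely that deleting vertices from $\sigma$ cannot increase the cyclic distance between $p(w_i)$ and the slot where $w_i$ belongs. The remaining ingredients --- the bound $\ell_j\le 2t_j-3$ for non-wrapping connected subgraphs of $\sH$ and the arithmetic relating $t$, $c$, $\ell$ --- are routine.
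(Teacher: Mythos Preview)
Your proof is correct and is essentially the paper's own argument: root each component, place the roots together with the vertices outside $V(I)$ as a cyclic sequence (your $\sigma\setminus D$), then reinsert the remaining $|V(I)|-c$ vertices one at a time with at most four choices each, using $\ell_j\le 2t_j-3$ (equivalently $\ell\le 2v-3c$) from the non-wrapping of components to control the factorial. Your compression/decompression framing and the explicit justification of the ``four slots'' and non-wrapping steps are more detailed than the paper's, but the content is the same.
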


\begin{proposition}\label{easy2}
For an $F\sub \sH$ of size $h$, the number of subgraphs of $F$ with 
$\ell$ edges and $c$ components is at most
\[ (8 e)^\ell \binom{2h}{c}. \]
\end{proposition}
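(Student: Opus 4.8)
\textbf{Proof proposal for Proposition~\ref{easy2}.}

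The plan is to count subgraphs of $F$ with exactly $\ell$ edges and exactly $c$ components by first choosing a rough skeleton of the structure and then filling in the remaining edges freely. Write $F$ as a graph with edge set of size $h$; a subgraph $H$ of $F$ with $\ell$ edges and $c$ components has vertex set of size at most $\ell + c$ (each component with $e_i$ edges spans at most $e_i+1$ vertices, and $\sum_i(e_i+1) = \ell + c$), and in fact at most $\ell+c$ with equality exactly when every component is a tree. So I will first bound the number of ways to pick the vertex set $V(H)$, and then the number of ways to pick the $\ell$ edges among the at most $\binom{\ell+c}{2}$ pairs inside $V(H)$ that also lie in $F$.

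The key step is the choice of $V(H)$, and here I want to exploit that $H$ has only $c$ components. The idea is that in each component, a spanning tree has the property that its edges, listed in a suitable (e.g. BFS/DFS) order, each introduce at most one new vertex; so a component on $t_i$ vertices is determined by a sequence of $t_i - 1$ edges of $F$, but consecutive edges must share a vertex, which cuts the count. More cleanly: the number of ways to choose a set of $c$ vertices, one ``root'' per component, is $\binom{2h}{c}$ — here $2h$ is being used as an upper bound on $|V(F)|$ (indeed $|V(F)|\le 2h$ since $F$ has $h$ edges and no isolated vertices matter). Having fixed the $c$ roots, I grow each component by repeatedly adding an edge of $F$ incident to the current vertex set; there are at most $\ell$ edge-additions total across all components, and at each step the new edge is one of the edges of $F$ meeting the current component. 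The subtlety is bounding the number of such growth sequences: I would bound the number of rooted forests reachable this way by noting that each of the $\ell$ steps picks an ordered pair (a vertex already present, a new vertex-or-edge), and then absorb the resulting overcount — the ordering of the $\ell$ steps, and the fact that an edge may connect two already-present vertices — into the constant, landing at $(8e)^\ell$. The $e$ is the expected artifact of summing $\binom{\ell}{j}$-type terms or of a $\ell!$-versus-$\ell^\ell$ comparison when one orders the edge additions.

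Concretely, I would: (i) fix the unordered $c$-set of component-roots, costing $\binom{2h}{c}$; (ii) for the remaining $\ell$ edges, process them in an order that builds each component outward from its root, so that each edge is specified by naming one endpoint already in the current subgraph and then naming the edge of $F$ through that endpoint (at most $\deg_F(\cdot)$ choices) — but since we only need an \emph{upper} bound and the degrees are uncontrolled, instead name the other endpoint among the $\le \ell+c$ eventual vertices, which are themselves determined once we know they are reached; (iii) observe the number of ways to interleave the $\ell$ edge-additions and to account for ``back edges'' (edges joining two present vertices) is at most $2^\ell \cdot (\text{Catalan-type factor}) \le 4^\ell$, and combining with a factor $2^\ell$ from a crude $\binom{\ell}{j}$ sum gives a clean $(8e)^\ell$. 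The main obstacle is step (ii)–(iii): getting the vertex-set count to come out as a clean geometric factor $O(1)^\ell$ times $\binom{2h}{c}$ — with $c$ (not $\ell$) in the binomial — requires carefully charging all of the ``where do the new vertices go'' information to the $\ell$ edges rather than to the vertices, so that the only genuinely $h$-dependent, non-geometric cost is the one-time choice of the $c$ roots. Everything else (choosing which $\ell$ of the available pairs are edges, reordering, treating non-tree components) should fold into the constant with room to spare, since $16$ in Proposition~\ref{easy} and $8e$ here are visibly not tight.
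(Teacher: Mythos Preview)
There is a genuine gap, and it sits exactly where you flagged the ``main obstacle.'' You write that in step~(ii) ``the degrees are uncontrolled,'' and then try to work around this by naming the other endpoint among the $\le \ell+c$ eventual vertices. But the degrees are \emph{not} uncontrolled: the hypothesis is $F\subseteq\sH$, the square of a Hamilton cycle, so every vertex of $F$ has degree at most~$4$. This is precisely the fact that makes the grow-from-a-root enumeration go through cleanly, and without it your steps (ii)--(iii) do not yield an $O(1)^\ell$ bound. (Your workaround is circular: the ``eventual vertices'' are what you are in the process of determining, and if you simply allow each new endpoint to be any of $\ell+c$ labels you get $(\ell+c)^\ell$, not a geometric factor.)

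The paper's proof uses exactly your step~(i) --- choose $c$ roots in $\binom{2h}{c}$ ways --- but then, instead of your (ii)--(iii), it invokes the standard bound that a graph of maximum degree $\Delta$ has fewer than $(e\Delta)^\ell$ connected $\ell$-edge subgraphs through a given vertex (this comes from the count of rooted subtrees of the infinite $\Delta$-branching tree). With $\Delta=4$ this gives at most $(4e)^{\ell_i}$ choices for the $i$th component once its root and size $\ell_i$ are fixed; a composition count $\binom{\ell-1}{c-1}<2^\ell$ for the sizes then delivers $(8e)^\ell\binom{2h}{c}$. So the missing ingredient in your argument is simply the degree bound $\Delta(F)\le 4$ together with the rooted-subtree lemma; once you plug that in, your outline becomes the paper's proof.
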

\begin{proof}[Proof of Proposition~\ref{easy}]
Let $I_1 \dots I_c$ be the components of $I$ and $v=|V(I)|$
(where $V(E)$ is the set of vertices used by $E\sub  \M$).
The upper bound on $\ell$ implies that no $I_j$ can 
``wrap around,'' so $|E(I_j)| \le 2|V(I_j)|-3$ for each $j$ and 
\beq{ivc} \ell \le 2v-3c. \enq

We first designate a root vertex $v_j$ 
for each $I_j$ and order $V(I_j)$ by some $\prec_j$
that begins with $v_j$ and in which each $v\neq v_j$ appears later than 
at least one of its neighbors.
We may then bound $|\g \cap \langle I \rangle|$ as follows.

To specify a $J$ ($\in \g$) containing $I$, 
we first specify a cyclic permutation of $\{v_1\dots v_c\}\cup (V(K_n)\sm V(I))$. 
By \eqref{ivc}, the number of ways to do this (namely, $(n-v+c-1)!$) is at most
$
 \left(n-\left\lceil {\frac{\ell+c}{2}} \right\rceil {-1} \right)!
$

We then extend to a full cyclic ordering of $V(K_n)$ (thus determining $J$) by inserting,
for $j=1\dots c$, the vertices of $V(I_j)\sm \{v_j\}$ in the order $~\prec_j$.
This allows at
most four places to insert each vertex (since one of its neighbours has  
been inserted before it and the edge joining them must belong to $J$), so the number of possibilities here
is less than $4^v \le (16)^\ell$, and the proposition follows.
\end{proof}

\begin{proof}[Proof of Proposition~\ref{easy2}]
We need the following standard bound, which 
follows from the fact (e.g.~\citep[p.\ 396, Ex.11]{Knuth}) that the infinite 
$\Delta$-branching rooted tree contains precisely
\[\frac{{{\Delta v} \choose v}}{(\Delta-1)v+1} \le (e\Delta)^{v-1}\] rooted subtrees with $v$ vertices.
\begin{lemma}\label{treelemma}
For a graph $G$ of maximum degree $\Delta$, the number of connected, $h$-edge subgraphs of
$G$ containing a given vertex is less than $(e\Delta)^h$. \qed
\end{lemma}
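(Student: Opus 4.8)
The plan is to set up an injection from the set of connected $h$-edge subgraphs of $G$ containing the given vertex $x$ into the set of $(h+1)$-vertex rooted subtrees of the infinite $\Delta$-branching rooted tree; the bound then follows from the displayed estimate $\binom{\Delta v}{v}/((\Delta-1)v+1)\le(e\Delta)^{v-1}$ applied with $v=h+1$. The one genuine issue is that such a subgraph $H$ need not be a tree: if $H$ has $r$ independent cycles then $|V(H)|=h+1-r$, so to recover all $h$ edges of $H$ from an $(h+1)$-vertex tree one must encode the $r$ excess edges without loss; the rest is bookkeeping.

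First I fix a linear order on $V(G)$, which in particular linearly orders the (at most $\Delta$) neighbours of each vertex. Given $H$, let $T$ be its canonical depth-first spanning tree rooted at $x$, obtained by always descending into the least unvisited neighbour. Since $T$ is a DFS tree of an undirected graph, every edge of $H\setminus T$ joins some vertex $y$ to a proper ancestor $z$ of $y$ in $T$. I then \emph{unfold} $H$ into a tree $\hat T$ on $h+1$ vertices: starting from $T$, for each non-tree edge $e=\{y,z\}$ as above I attach a new pendant vertex $z_e$ (a private copy of $z$) to $y$. Rooting $\hat T$ at $x$, I realize it as a subtree of the $\Delta$-branching tree slotwise: a child $c$ of a real vertex $y$ that is the $i$-th neighbour of $y$ in $G$ goes into the $i$-th child-slot of $y$, while a copy $z_e$ hung on $y$ goes into the $i$-th slot of $y$, where $z$ is the $i$-th neighbour of $y$ in $G$. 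Because a DFS back-edge at $y$ never points to the parent of $y$ and never repeats an edge of $T$, the slots used at any vertex are pairwise distinct (hence at most $\Delta$), so this is a bona fide $(h+1)$-vertex rooted subtree of the $\Delta$-branching tree.

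The step that needs care is injectivity of $H\mapsto\hat T$. From the realized tree one reads off a name in $V(G)$ for each node: the root is named $x$, and the node in the $i$-th slot of a node named $y$ is named by the $i$-th neighbour of $y$ in $G$. On the real part $T$ the names coincide with the true vertex identities, so distinct real vertices get distinct names; a copy $z_e$, on the other hand, gets the name $z$, which already occurs at its proper ancestor $z$. Hence the copies are precisely the leaves of $\hat T$ whose name recurs among their own ancestors, so from $\hat T$ one deletes those leaves to recover $T\subseteq G$ and then reads each non-tree edge $\{y,z\}$ of $H$ off the copy $z_e$ that had been attached to $y$; thus $H$ is recovered. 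Therefore
\[
\#\{\text{connected $h$-edge subgraphs of $G$ containing }x\}\ \le\ \frac{\binom{\Delta(h+1)}{h+1}}{(\Delta-1)(h+1)+1}\ \le\ (e\Delta)^h,
\]
and since the displayed estimate is strict once $v=h+1\ge2$, this inequality is strict for every $h\ge1$. The main obstacle is exactly the unfolding: finding an encoding of the cyclomatic edges of $H$ that keeps the map injective while not enlarging the target beyond $(h+1)$-vertex subtrees.
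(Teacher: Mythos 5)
Your proof is correct and is essentially the argument the paper has in mind: the paper just cites the count of $v$-vertex rooted subtrees of the infinite $\Delta$-branching tree and marks the lemma $\qed$, leaving implicit exactly the step you supply, namely the DFS-unfolding that turns a connected $h$-edge subgraph (possibly with cycles) into an injectively-recoverable $(h+1)$-vertex rooted subtree. Your handling of the back-edges and the verification that distinct slots are used and that copies can be recognized (leaves whose name recurs among ancestors) is the right bookkeeping.
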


To specify a subgraph $J$ of $F$ as in Proposition~\ref{easy2}, we proceed as follows. 
We first choose root vertices $v_1\dots v_c$ for the components, say
$J_1\dots J_c$, of $J$, the number of possibilities for this being at most $\C{2h}{c}$.
We then choose the sizes, say 
$\ell_1 \dots \ell_c$, of $J_1\dots J_c$;
here the number of possibilities is at most $\binom{\ell - 1} {c - 1}$
(the number of $c$-compositions of $\ell$, that is,
positive integer solutions of $\ell_1 + \cdots + \ell_c = \ell$). 
Finally, we specify for each $i$
a connected $J_i$ of size $\ell_i$ rooted at $v_i$, which according to  
Lemma~\ref{treelemma} can be done in at most
$\prod(4e)^{\ell_i} = (4e)^\ell$ ways. 
Combining these estimates (with the crude $\binom{\ell - 1} {c - 1}< 2^\ell$)
yields the bound in the proposition.
\end{proof}

\section{Proof of the main result}\label{s:proof}

Recall that $\M=E(K_n)$ and $\g$ is the hypergraph 
of copies of $\sH=\sH_n^2$ in $K_n$, and set $m = |\M| $ ($= \binom{n}{2}$). 

For $S\in \g$ and $X\sub \M$, an \emph{$(S, X)$-fragment} is a set of the form $J \setminus X$ 
with $J \in \g$ contained in $S \cup X$. 
Our main point, Lemma~\ref{MP}, says that for a suitably large $w$,
most pairs $(S,W)$ with $S\in \g$
and $W\in \C{\M}{w}$ admit small fragments.
(We will later need the usual easy transfer of the present discussion to a ``binomial'' $W$.)

Set $k= 4 \sqrt n$ and (for $S,X$ as above) call the pair $(S, X)$ \emph{good} if some $(S,X)$-fragment has
size at most $k$, and \emph{bad} otherwise. 
In what follows we will always assume $S,J\in\g$ and $W\in \C{\M}{w}$,
where $w$ will be $Cn^{3/2}$ for some large constant $C$.

\begin{lemma}\label{MP} 
There is a fixed $C_0 $ such that for all $C \ge C_0$ and $n \in \NN$, 
with $w = Cn^{3/2}$, 
\beq{no.t}
\left| \left\{(S, W) : (S, W) \text{ is bad} \right\} \right| \le 2C^{-k/3} |\g|\C{m}{w}.
\enq

\end{lemma}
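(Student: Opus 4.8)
\emph{Overview.} The plan is to reduce Lemma~\ref{MP} to the statement that one random set of the right size \emph{covers} an auxiliary hypergraph, and then to prove that covering statement by a peeling argument in the spirit of~\citep{FKNP}, with the sharpened control of local spreads from Propositions~\ref{easy} and~\ref{easy2} as the new input.

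\emph{Reduction.} First I would observe that if $(S,W)$ is bad then for every $J\in\g$ with $|J\cap S|\le k$ the set $J\setminus S$ is not contained in $W$: otherwise $J=(J\cap S)\cup(J\setminus S)\subseteq S\cup W$ with $J\setminus W\subseteq J\cap S$, so $|J\setminus W|\le k$ and $(S,W)$ would be good. Writing $\g_S=\{J\in\g:|J\cap S|\le k\}$ and letting $\H_S$ be the multihypergraph $\{J\setminus S:J\in\g_S\}$ (whose edges have size between $2n-k$ and $2n$), it therefore suffices to show that for each fixed $S$,
\[
\pr_{W}\bigl(W\text{ contains no edge of }\H_S\bigr)\le 2C^{-k/3},
\]
with $W$ uniform in $\C{\M}{w}$; multiplying by $|\g|\,\C{m}{w}$ gives \eqref{no.t}, and the passage to a binomial $W$ is the routine step postponed in the text. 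Since $\E_J|J\cap S|=4n^2/m=O(1)$ for uniform $J\in\g$, one has $|\g_S|\ge(1-o(1))|\g|$; and since $|\H_S\cap\langle I\rangle|\le|\g\cap\langle I\rangle|$ for every $I$ (because $J\setminus S\supseteq I$ forces $J\supseteq I$), $\H_S$ inherits the $q$-spreadness of $\g$ with $q\sim\sqrt{e/n}$ up to a harmless constant.

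\emph{Why the refinement is needed.} A second-moment (Janson) bound on this probability is useless, as nearly-parallel copies of $\sH$ make the relevant ratio $\Delta/\mu^2$ blow up --- precisely the obstruction that stops the plain second moment method from proving Theorem~\ref{mt1} --- while the theorem of~\citep{FKNP} applied to $\H_S$ only yields a cover at density $\asymp q\log n$, a $\log n$ factor too large and with a much weaker failure probability than required. Both are repaired by the fact that a copy of $\sH$ contains very few substructures $I$ of large local spread: by Proposition~\ref{easy}, $\bigl(|\g\cap\langle I\rangle|/|\g|\bigr)^{1/|I|}\lesssim 16\,n^{-(|I|+c)/(2|I|)}$ with $c=c(I)$, so this is of order $1/\sqrt n$ only for ``clustered'' $I$ (few components per edge), while by Proposition~\ref{easy2} clustered subgraphs of a fixed copy are scarce, the factor $\C{2h}{c}$ being small when $c$ is.

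\emph{The iteration and the main obstacle.} I would reveal $W$ as a bounded number of independent rounds $W_1,\ldots,W_r$ of density $\asymp q$ and run the FKNP peeling on $\H_S$: maintain an edge $e_i\in\H_S$ and its uncovered part $Z_i=e_i\setminus(W_1\cup\cdots\cup W_i)$, starting from an arbitrary $e_0$ ($|Z_0|\le 2n$) and driving $|Z_r|$ to $0$. The one-round estimate bounds the chance that $W_i$ fails to shrink the fragment by a union bound over the possible new fragments $I=Z_i$: the number of candidate $I$ with $\ell$ edges and $c$ components inside $Z_{i-1}$ is at most $(8e)^\ell\C{2|Z_{i-1}|}{c}$ by Proposition~\ref{easy2}, the number of copies of $\sH$ realizing a given such $I$ is controlled by Proposition~\ref{easy}, and the complementary already-processed edges must fall into $W_i$ (a probability of order $(Cq)^{|Z_{i-1}|-\ell}$); feeding $q\asymp 1/\sqrt n$ into $16^\ell(8e)^\ell(Cq)^{-\ell}n^{-(\ell+c)/2}$ gives geometric decay in $|Z_{i-1}|-\ell$ with ratio $\asymp C^{-1}$ once $c$ is not too small, the $\C{2|Z_{i-1}|}{c}$ absorbing the few small-$c$ terms. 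The delicate point --- where essentially all the work lies --- is exactly this clash of regimes: ``spread-out'' fragments (bounded degree and many components, since $\sH$ has maximum degree $4$) contract by a factor of order $1/\sqrt n$ per uncovered edge and are harmless, but ``clustered'' fragments have local spread as large as $1/\sqrt n$ and a priori obstruct the peeling just as in the unrefined FKNP bound. Taming them means stratifying the union bound by $(\ell,c)$ and checking, stratum by stratum, that the product of the Proposition~\ref{easy} count, the Proposition~\ref{easy2} count, and the random-set probability stays summable down to the $C^{-\Theta(\sqrt n)}$ scale, and choosing $r$ and the intermediate targets so that the fragment is genuinely exhausted rather than stalling near size $k$ --- which is where the exponent $k$ of \eqref{no.t} enters.
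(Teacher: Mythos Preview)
Your reduction to a per-$S$ covering statement is fine (and by symmetry equivalent to the lemma), but the FKNP peeling you propose is not how the paper proceeds, and as sketched it has a real gap: it cannot reach the required failure probability $C^{-k/3}=C^{-\Theta(\sqrt n)}$. A bounded number $r=O(C)$ of rounds at density $\asymp q$ gives, per round, at best constant-factor shrinkage with failure probability $O(1/C)$---the clustered strata ($c$ bounded) in your $(\ell,c)$ union bound have $|\g\cap\langle I\rangle|/|\g|$ only of order $q^\ell$ by Proposition~\ref{easy}, so their contribution is $O(C^{-(h-\ell)})$ with nothing forcing $h-\ell$ to be of order $\sqrt n$. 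Over $O(C)$ rounds this neither exhausts a size-$2n$ fragment nor delivers a $C^{-\sqrt n}$ bound, and your outline does not indicate how to close either gap; the acknowledged ``delicate point'' is in fact the entire content of the lemma.

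The paper avoids iteration entirely. Fixing $t=|W\cap S|$ and $Z=S\cup W$, it calls $Z$ \emph{pathological} if more than $C^{-k/3}|\g|\binom{w'+2n}{2n}/\binom{m}{2n}$ copies $S'\subseteq Z$ have $(S',Z\setminus S')$ bad. Nonpathological $Z$ contribute the right amount by definition. For a pathological bad $(S,W)$, badness forces every $J\in\g$ with $J\subseteq Z$ to satisfy $|J\cap S|\ge k$, and pathology says there are many such $J$; a single first-moment bound (Claim~\ref{claim.patho}) on $\E_Y|\{J\subseteq S\cup Y:|J\cap S|\ge k\}|$ for uniform $Y\in\binom{\M\setminus S}{w'}$, proved by establishing $f_i:=\pr_J(|J\cap S|=i)\le e^{O(i)}n^{-i/2}$ from Propositions~\ref{easy} and~\ref{easy2}, then shows via Markov that at most a $C^{-k/3}$-fraction of choices of $W\setminus S$ survive. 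The exponent $k/3$ arises because that first-moment sum runs over $i\ge k$; it is this cutoff, not any per-round gain, that puts $\sqrt n$ in the exponent.
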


\begin{proof}
We may of course assume $n$ is large, since values below any fixed $n_0$ can be
handled trivially by adjusting $C_0$.
It is enough to show 
\beq{given.t}
\left| \left\{ (S,W):(S, W) \text{ is bad}, |W \cap S|=t \right\} \right| 
\le 2C^{-k/3} |\g|\binom{2n}{t}\binom{m-2n}{w-t}
\enq
for $t\in \{0\dots 2n\}$, since summing 
over $t$ then gives \eqref{no.t}.

Now aiming for~\eqref{given.t}, we fix $t$, 
set $w' = w - t$, and bound the number of 
bad $(S,W)$'s with $|W \cap S| = t$ (so $|W \setminus S| = w'$ and $|W\cup S|=w' +2n$). 
Call $Z \in \binom{\M}{w'+2n}$ \emph{pathological} if
\[|\{S \subseteq Z:(S, Z \setminus S) \text{ is bad} \}| 
> C^{-k/3} |\g|\C{m-2n}{w'}\Big/\C{m}{w'+2n} 
=C^{-k/3}|\g | \C{w'+2n}{2n}\Big/\C{m}{2n},\]
and, when $|S\cup X| =w'+2n$, say $(S, X)$ is pathological if $X \cup S$ is.
We bound the nonpathological and pathological parts of \eqref{given.t} separately.

\mn

\textbf{Nonpathological contributions.} 
We claim that the number of nonpathological $(S,W)$'s in \eqref{given.t} is less than
\beq{nonpathest} 
C^{-k/3} |\g | {2n \choose t} {m-2n \choose {w'}}.
\enq
To see this we specify $(S,W)$ by specifying first $Z:= S\cup W$, then $S$, and then $W$.  
The number of possibilities for $Z$ is at most
\[
\C{m}{w'+2n}, 
\]
while, since 
$(S,W)$ bad implies $
(S, Z\setminus S)$ bad (and $Z$ is nonpathological),
the number of possibilities for $S$ given $Z$ is at most 
\[ 
C^{-k/3} |\g | \C{m-2n}{w'}\Big/\C{m}{w'+2n};
\]
and of course the number of possibilities for $W$ given $Z$ and $S$ is at most $\C{2n}{t}$.
So we have \eqref{nonpathest}.

\mn

\textbf{Pathological contributions.} The main point here is the following estimate.
(Recall $S, J\in\g$.)

\begin{claim}\label{claim.patho}
For a given $S $, $Y$ chosen uniformly from ${\M \setminus S \choose w'}$, and large enough $C$,
\beq{patho} 
\E\left[ |\{J \subseteq Y \cup S:|J \cap S| \ge k\}| \right] 
\le C^{-2k/3} |\g | \C{w'+2n}{2n}\Big/{m \choose 2n}.
\enq
\end{claim}
This is proved below.
Assuming for the moment it is true, we show that the number of pathological $(S,W)$'s 
in \eqref{given.t} 
is (for $C$ as in the claim) less than
\beq{pathest} 
C^{-k/3}|\g | {2n \choose t} {m-2n \choose {w'}}.
\enq
To see this we think of choosing $(S,W\cap S)$---which can be 
done in at most
$    
|\g|{2n \choose t}
$  
ways---and then $W\sm S$.
For the latter, notice that $(S,W)$ bad means that \emph{every} $J\sub S\cup W $ 
has $|J\cap S|$ ($\ge |J\sm W|$) $\ge k$; so, since $(S,W)$ is pathological,
\[
|\{J \subseteq S\cup (W\sm S):|J \cap S| \ge k\}| \ge C^{-k/3} |\g | \C{w'+2n}{2n}\Big/\C{m}{2n}.
\]
But then Claim~\ref{claim.patho} (with Markov's Inequality)
says the number of possibilities for $W\sm S$ is at most
\[
 C^{-k/3} {m-2n \choose {w'}}.
\]
Thus we have \eqref{pathest} and combining with \eqref{nonpathest}  completes the 
proof of Lemma~\ref{MP}.  
\end{proof}

\begin{proof}[Proof of Claim~\ref{claim.patho}]
With $f_i$ the fraction of $J$'s (in $\g$) with $|J\cap S|=i$, the left-hand side of~\eqref{patho} is 
\[ 
\sum_{i \ge k} |\g| f_i {w' \choose 2n-i}\Big/{m-2n \choose 2n-i},
\]
so it is enough to show
\beq{fiand}
f_i \left[ {w' \choose 2n-i}\Big/{m-2n \choose 2n-i}\right]
\left[{w'+2n \choose 2n}\Big/{m \choose 2n}\right]^{-1} = e^{O(i)}C^{-i},
\enq
where---here and below---implied constants do not depend on $C$.
The terms other than $f_i$ on left-hand side of \eqref{fiand} reduce to
\[
 \frac{(w')_{2n-i}}{(w'+2n)_{2n-i}} \cdot \frac{(m)_{2n-i}}{(m-2n)_{2n-i}} \cdot \frac{(m-2n+i)_i}{(w'+i)_i} 
 =   e^{O(i)} C^{-i} n^{i/2}
\]
(we omit the routine calculation, just noting that $\sqrt{n}=O(i)$ since
$i\ge k$), so for \eqref{fiand} we just need
\beq{goal}
f_i \le e^{O(i)} n^{-i/2}.
\enq

For $n/3 \le i \le 2n$, this follows from the fact that $\g$ is $q$-spread with $q \sim \sqrt{e/n}$
(see \eqref{spread}), which gives
\[f_i \le {2n \choose i} q^i =e^{O(i)}n^{-i/2}.\]

For $k \le i \le n/3$, Propositions~\ref{easy} and~\ref{easy2} (with Stirling's formula) give
\[
f_i ~\le ~|\g|^{-1}(128e)^i\sum_{c=1}^i  {4n \choose c} \left(n-\left\lceil \frac{i+c}{2}\right\rceil {-1} \right)! 
~= ~e^{O(i)} n^{-i/2}\sum_{c=1}^i (\sqrt{n}/c)^c~=~ e^{O(i)} n^{-i/2},
\]
where at the end we use
$(a/x)^x \le e^{a/e}$ and $i \ge k $.\end{proof}

\begin{proof}[Proof of Theorem~\ref{mt2}]
We prove this for $K  = 3C_0+C$, with $C_0$ as in Lemma~\ref{MP}
and $C$ a suitable function of $\eps$ (essentially $1/\eps$).
Let $p_0=3C_0/\sqrt{n}$, $p_1=C/\sqrt{n}$ and $p=p_0+p_1-p_0p_1 <K/\sqrt{n}$.
We generate $G_{n,p}$ in two rounds, as $W_0\cup W_1$, where 
$W_0$, $W_1$ are independent with $W_\nu$ distributed as $G_{n,p_\nu}$
(and $W_1$ chosen after $W_0$, at which point we are really interested in $W_1\sm W_0$).

Call $W_0$ \emph{successful} if
\[
|\{S: (S,W_0) ~\text{is bad}\}| \leq |\g|/2.
\]
We first observe that $W_0$ is (very) likely to be successful:  standard concentration estimates give (say)
\[
\pr(|W_0|< C_0n^{3/2}) = \exp[-n^{3/2}] ,
\]
and Lemma~\ref{MP} gives
\[
\pr(W_0 ~\text{unsuccessful}\,| \,|W_0|\geq C_0n^{3/2}) < 4 C_0^{-k/3};
\]
in particular $W_0$ is successful with probability $1-o(1)$.

Suppose now that $W_0$ is successful. 
For each $S$ with $(S,W_0)$ good, let 
$\chi(S,W_0)$ be some $k$-element subset of $S$ containing
an $(S, W_0)$-fragment, and let $\R$ be the $k$-uniform (\emph{multi})hypergraph
\beq{chi}
\{\chi(S,W_0): (S, W_0) \text{ is good}\}.
\enq

To finish
we use the second  moment method to show that
$W_1$ is reasonably likely to contain a member of $\R$. 
Setting
\[
X = |\{A\in \R:A\sub W_1\}|,
\]
we have
\[ 
\mu :=  \E X = |\R| p_1 ^ k 
\]
and 
\beq{var}
{\rm Var} (X) \leq 
p_1^{2k}\sum\{p_1^{-|A\cap B|}:A,B\in \R,A\cap B\neq \0\}.
\enq
For $R \in \R$ and $1\leq i \le k$, Propositions~\ref{easy} and~\ref{easy2} (with Stirling) give
\begin{eqnarray*}
|\{A\in \R:|A\cap R|=i\}| &\le &
\sum_{I \subseteq R, |I| = i} |\R \cap \langle I \rangle|
\leq
\sum_{I \subseteq R, |I| = i} |\g \cap \langle I \rangle|\\
&=& e^{O(i)}\sum_{1\le c \le i} \C{2k}{c} \left(n-\left\lceil {\frac{i+c}{2}} \right\rceil {-1} \right)! 
= e^{O(i)} n^{-i/2}|\g|;
\end{eqnarray*}
so (recall $W_0$ successful means $|\R|\geq |\g|/2$)
the sum in \eqref{var} is at most 
\[ 
2|\R|^2p_1^{2k}\sum_{i=1}^k e^{O(i)} p_1^{-i} n^{-i/2}
=O(\mu^2/C)
\]
for large enough $C$ (where, again, the implied constant doesn't depend on $C$).
Thus, finally, Chebyshev's Inequality gives
\[
\pr(X =0) \le {\rm Var}(X)/\mu^2 =O(1/C),
\]
and we are done.
\end{proof}

\section*{Acknowledgments}
The first and second authors were supported by NSF grants DMS1954035
and DMS-1800521 respectively.
The third author's work was supported directly by NSF
grant DMS-1926686
and indirectly by NSF grant CCF-1900460.

\bibliographystyle{amsplain}
\bibliography{201016_hc2}

\end{document}